\newtheorem{theorem}{Theorem}
\newtheorem{lemma}{Lemma}
\def\No{{\mathcal N}}
\def\E{{\mathsf{E}}}
\def\RR{{\mathbb R}}
\def\pairs{\operatorname{PP}}
\def\Ker{\operatorname{Ker}}
\def\Vect{\operatorname{Vect}}
\def\vv{{\bf v}}
\def\a{{\bf a}}
\def\0{{\bf 0}}
\def\Y{{\bf Y}}
\def\X{{\bf X}}
\def\x{{\bf x}}
\def\bsigma{{\bf a}}
\title{A short proof of Isserlis' theorem}
\author{Hans Z. Munthe-Kaas}
\address{Department of Mathematics and Statistics, UiT – The Arctic University of Norway, Tromsø, Norway. Department of Mathematics, University of Bergen, Bergen, Norway. }
\email{Hans.Munthe-Kaas@uib.no}
\author{Olivier Verdier}
\address{Department of Computing, Electrical Engineering and Mathematical Sciences,Western Norway University of Applied Sciences, Bergen, Norway.}
\email{olivier.verdier@gmail.com}
\author{Gilles Vilmart}
\address{Section of Mathematics, University of Geneva.}
\email{Gilles.Vilmart@unige.ch}
\date{\today}							
\begin{document}
\begin{abstract}
We show that Isserlis' theorem follows as a corollary to the invariant tensor theorem for isotropic tensors. 
\end{abstract}
\maketitle
\noindent
{\small \textit{AMS Subject Classification (2020)}: Primary: 60C05, 15A72, Secondary: 41A58, 60H35.}

\section{Isserlis's theorem}
Isserlis' theorem was first derived by Leon Isserlis in 1918~\cite{isserlis}. 
This result, also known as Wick's probability theorem, permits to express arbitrary high-order moments of a given multivariate normal distribution $\Y$ in terms of the coefficients of its covariance matrix $\Sigma$.
\begin{theorem}[Isserlis] \label{thm:isserlis} Let $\Y= (Y_1,Y_2,\ldots,Y_n)^T \sim \No_{n}({\bf 0},\Sigma)$ be a multivariate normally distributed vector in $\RR^n$ with mean zero.
Then 
\begin{equation} \label{eq:isserlis}
\E(Y_1 Y_2\cdots Y_n) = 
\sum_{p\in \pairs(n)}\prod_{(\ell,r)\in p} \E(Y_\ell Y_r).
\end{equation}
\end{theorem}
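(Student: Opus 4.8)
The plan is to obtain \eqref{eq:isserlis} as a corollary of the classical invariant tensor theorem (the first fundamental theorem of invariant theory for the orthogonal group $O(n)$), which states that an $O(n)$-isotropic tensor of odd rank vanishes, while one of even rank $k$ is a linear combination of the \emph{pairing tensors} $\prod_{(\ell,r)\in p}\delta_{j_\ell j_r}$ indexed by $p\in\pairs(k)$. First I would reduce to the standard normal case: factoring $\Sigma=AA^T$ and writing $\Y=A\X$ with $\X\sim\No_n(\0,I)$, multilinearity of the expectation gives
\[
\E(Y_1\cdots Y_n)=\sum_{j_1,\ldots,j_n}A_{1j_1}\cdots A_{nj_n}\,T_{j_1\cdots j_n},\qquad T_{j_1\cdots j_n}:=\E(X_{j_1}\cdots X_{j_n}),
\]
so everything hinges on understanding the moment tensor $T$ of the isotropic Gaussian $\X$.

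The key observation is that $T$ is an isotropic tensor. Indeed, for any orthogonal $Q$ the vector $Q\X$ is again $\No_n(\0,I)$-distributed, hence the transformed tensor $\sum Q_{i_1j_1}\cdots Q_{i_nj_n}T_{j_1\cdots j_n}=\E\bigl((Q\X)_{i_1}\cdots(Q\X)_{i_n}\bigr)$ equals $T_{i_1\cdots i_n}$. By the invariant tensor theorem, $T$ therefore vanishes when $n$ is odd (consistent with $\pairs(n)=\emptyset$) and, when $n$ is even, is a combination of pairing tensors. Since the scalar random variables $X_j$ commute, $T$ is fully symmetric; averaging over the transitive $S_n$-action on $\pairs(n)$ then collapses these coefficients to a single constant $c$, giving
\[
T_{j_1\cdots j_n}=c\sum_{p\in\pairs(n)}\prod_{(\ell,r)\in p}\delta_{j_\ell j_r}.
\]

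To pin down $c$ (for $n=2m$), I would evaluate $T$ at the diagonal multi-index $j_{2k-1}=j_{2k}=k$: the left-hand side is $\E(X_1^2\cdots X_m^2)=1$ by independence and unit variance, whereas on the right only the single pairing $\{(1,2),\ldots,(2m-1,2m)\}$ survives the Kronecker constraints, so the right-hand side equals $c$; thus $c=1$. Substituting back and resolving each $\delta$-constraint gives $\sum_{j}A_{\ell j}A_{rj}=(AA^T)_{\ell r}=\Sigma_{\ell r}=\E(Y_\ell Y_r)$ for every pair $(\ell,r)$, and these products reassemble precisely into the right-hand side of \eqref{eq:isserlis}.

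I expect the main conceptual step to be recognizing the Gaussian moment tensor as isotropic; once that is in place the invariant tensor theorem does the combinatorial heavy lifting, and the only remaining work is the bookkeeping that fixes the constant $c=1$ and transfers the identity from $\X$ back to $\Y$.
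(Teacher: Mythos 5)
Your proof is correct, and its keystone is the same as the paper's: the moment tensor of the standard Gaussian is isotropic and symmetric, so the invariant tensor theorem reduces it to a single multiple of the sum of pairing tensors. Where you diverge is in the supporting steps. The paper proves a more general intermediate theorem, valid for any isotropic random variable $\X$ in $\RR^d$, with a distribution-dependent constant $c_k(\X)$; it needs a lemma identifying $\E(\X\X^T)=\lambda I_d$ to turn inner products into covariances, and it fixes the constant by choosing all test vectors $\a_j=\a$ equal, which requires the one-dimensional even-moment formula $\E(X^{2k})=\frac{(2k)!}{2^k k!}$ (obtained from the moment generating function) to conclude $c_k(\X)=1$ in the Gaussian case. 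You instead specialize to the standard Gaussian immediately and pin down the constant by evaluating the moment tensor at the diagonal multi-index $(1,1,2,2,\ldots,m,m)$: independence and unit variance give $\E(X_1^2\cdots X_m^2)=1$, while only the pairing $\{(1,2),\ldots,(2m-1,2m)\}$ survives the Kronecker constraints, so $c=1$ with no moment formula needed; your coordinate computation $\sum_j A_{\ell j}A_{rj}=\Sigma_{\ell r}$ likewise bypasses the covariance lemma. The paper's route buys generality: its theorem for isotropic $\X$ is of independent interest, and your diagonal-evaluation trick would fail in that setting because the coordinates of a general isotropic vector are not independent (indeed $c_k(\X)\neq 1$ in general). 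Your route buys economy and self-containedness in the Gaussian case---it even re-derives $\E(X^{2k})=\frac{(2k)!}{2^k k!}$ as a byproduct---and your symmetrization argument (averaging over the transitive $S_n$-action on $\pairs(n)$, using that all stabilizers have the same order) is an explicit justification of the ``all $\alpha_p$ equal'' step that the paper only asserts is ``obtained by contradiction.''
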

The sum is over all partitions of $n$ into disjoint pairs, denoted $\pairs(n)$. 
For odd $n$, $\pairs(n)=\emptyset$ is empty and \eqref{eq:isserlis} reduces to $E(Y_1 Y_2\cdots Y_n)=0$, and we recover that odd moments of normal distributions with mean zero always vanish.
For even $n$ with $n=2k$, the number of elements of $\pairs(n)$ is exactly
$
\# \pairs(n) = \frac{(2k)!}{2^k k!}.
$
For $n=4$, we have $\# \pairs(4)=3$ with
\[\pairs(4) = \{\{(1,2),(3,4)\},\{(1,3),(2,4)\},\{(1,4),(2,3)\}\},\] and formula \eqref{eq:isserlis} becomes
\[\E(Y_1 Y_2 Y_3 Y_4)= \E(Y_1 Y_2) \E(Y_3 Y_4) + \E(Y_1 Y_3) \E(Y_2 Y_4)+ \E(Y_1 Y_4) \E(Y_2 Y_3). \]
This result was shown by Isserlis first for $n=4$ in \cite{isserlis_1916} before he generalized the result for arbitrary integer~$n$ in \cite{isserlis} with an elementary proof of \eqref{eq:isserlis} by induction. 

\section{Isserlis' Theorem as a corollary of the invariant tensor theorem}
Let $V, (\_,\_)$ be a Euclidean space (e.g.\ $\RR^d$ with the standard inner product). A tensor $T\colon \otimes^n V\rightarrow \RR$, where we denote $\otimes^n V=V\otimes V\otimes \cdots \otimes V$ ($n$ times), is called \emph{isotropic}
if $T(Q\vv_1,Q\vv_2,\ldots,Q\vv_n) = T(\vv_1,\ldots, \vv_n)$ for all orthogonal matrices $Q\in O(V)$ and all vectors $\vv_i\in V$. 
Note that the scalar product $(\_,\_)$ itself is an example of an isotropic tensor $V\otimes V\rightarrow \RR$.

We shall derive formula \eqref{eq:isserlis} as a consequence of the invariant tensor theorem for isotropic tensors~\cite[Sec.\ II.9]{weyl},~\cite[Appendix I]{ABP73},~\cite[\S 33.2]{kolar} stating that the linear space of isotropic tensors is spanned by certain elementary isotropic tensors with two inputs. 

\begin{theorem}[Invariant tensor theorem for isotropic tensors] An isotropic tensor $T$ decomposes in a sum of products of pairwise inner products:
\begin{equation}T(\vv_1, \vv_2, \ldots,\vv_n) = 
\sum_{p\in \pairs(n)} \alpha_p\prod_{(\ell,r)\in p} (\vv_\ell, \vv_r)
\label{eq:T}
\end{equation}
for some scalars $\alpha_p\in \RR$ independent of $\vv_1, \vv_2, \ldots,\vv_n$. If in addition, $T$ is symmetric, i.e. invariant under permutations of its inputs, then all $\alpha_p$ are equal, i.e. independent of $p$. 
\end{theorem}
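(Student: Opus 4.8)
The plan is to prove the spanning statement \eqref{eq:T} by induction on $n$, and to obtain the symmetric refinement afterwards by an averaging argument. Two easy cases come first. If $n$ is odd, then since $-\mathrm{id}\in O(V)$ and $T(-\vv_1,\ldots,-\vv_n)=(-1)^n T(\vv_1,\ldots,\vv_n)$, isotropy forces $T=0$, consistent with $\pairs(n)=\emptyset$. For the base case $n=2$ I would write the bilinear form as $T(\vv_1,\vv_2)=(\vv_1,A\vv_2)$ for a linear map $A$; isotropy then reads $Q^{\mathsf T}AQ=A$, i.e. $A$ commutes with every $Q\in O(V)$. As the standard representation of $O(V)$ on $V$ is absolutely irreducible, $A=\alpha\,\mathrm{id}$, so $T=\alpha(\vv_1,\vv_2)$, the single matching in $\pairs(2)$.

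For the inductive step (even $n\ge 4$) I would freeze $\vv_1,\ldots,\vv_{n-1}$ and apply Riesz representation in the last slot, writing $T(\vv_1,\ldots,\vv_n)=(\ww,\vv_n)$ for a unique $\ww=\ww(\vv_1,\ldots,\vv_{n-1})\in V$, multilinear in its arguments. Using isotropy together with $\vv_n=Q(Q^{-1}\vv_n)$ shows that $\ww$ is equivariant, $\ww(Q\vv_1,\ldots,Q\vv_{n-1})=Q\,\ww(\vv_1,\ldots,\vv_{n-1})$. The key geometric observation is that $\ww$ must lie in the span of its arguments: were there a component orthogonal to that span, one could choose $Q\in O(V)$ fixing every $\vv_i$ (a reflection in the orthogonal complement suffices, since that complement has dimension at least $1$ unless the $\vv_i$ already span $V$), and equivariance would force the orthogonal component to be both negated and fixed, hence zero. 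Writing $\ww=\sum_{i=1}^{n-1} f_i\,\vv_i$ then gives $T=\sum_{i=1}^{n-1} f_i\,(\vv_i,\vv_n)$, where each coefficient $f_i$ is an isotropic scalar depending only on the $n-2$ vectors $\{\vv_j: j\ne i,\ j\le n-1\}$ and multilinear in them. Applying the induction hypothesis to each $f_i$ expands it over matchings of those $n-2$ indices, and multiplying by $(\vv_i,\vv_n)$ reattaches the pair $(i,n)$; since every $p\in\pairs(n)$ pairs $n$ with exactly one index $i$, this yields \eqref{eq:T}.

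I expect the main obstacle to be the rigorous extraction of the coefficient tensors $f_i$: the decomposition $\ww=\sum_i f_i\vv_i$ is forced only when $\vv_1,\ldots,\vv_{n-1}$ are linearly independent, and solving for the $f_i$ by Cramer's rule introduces the Gram determinant as a denominator, so one must verify that the resulting expression is a genuine polynomial multilinear tensor extending across the dependent locus. I would handle this by first proving the theorem for $\dim V\ge n$, where independent configurations are dense and a polynomiality/continuity argument removes the spurious denominator; this is precisely the range needed for Isserlis, where one may take $V=\RR^n$. Alternatively, one may simply invoke the classical first fundamental theorem of invariant theory for $O(V)$ \cite{weyl}, that $O(V)$-invariant polynomials in vector arguments are generated by the inner products, and restrict to the multilinear part. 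Finally, for the symmetric refinement I would avoid any independence hypothesis by averaging a given expansion over $S_n$: since $S_n$ acts transitively on $\pairs(n)$ with stabilizer of order $2^{n/2}(n/2)!$, symmetrizing $T=\sum_p\alpha_p\prod_{(\ell,r)\in p}(\vv_\ell,\vv_r)$ replaces all coefficients by their common average, giving $T=\beta\sum_{p\in\pairs(n)}\prod_{(\ell,r)\in p}(\vv_\ell,\vv_r)$ for a single scalar $\beta$, so that all $\alpha_p$ are equal.
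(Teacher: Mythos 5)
Your overall architecture is reasonable, and several pieces are correct: the odd-$n$ argument via $-\mathrm{id}\in O(V)$, the base case $n=2$, and the reflection argument showing that the Riesz vector ${\bf w}(\vv_1,\ldots,\vv_{n-1})$ lies pointwise in the span of its arguments. But the inductive step has a genuine gap, and it is exactly the one you flag yourself: the passage from pointwise membership ${\bf w}\in\operatorname{span}\{\vv_1,\ldots,\vv_{n-1}\}$ to a decomposition ${\bf w}=\sum_i f_i\,\vv_i$ in which each $f_i$ is a \emph{polynomial} function that is constant in $\vv_i$ and multilinear in the remaining $n-2$ vectors. Cramer's rule only gives $f_i=N_i/G$ with $G$ the Gram determinant and $N_i$ polynomial, and your proposed repair (density of independent configurations plus continuity) does not close this: density propagates identities from the independent locus, but it neither shows that $G$ divides $N_i$ nor that $f_i$ has the required multidegree. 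Nothing in the pointwise span lemma rules out the individual coefficients blowing up near the dependent locus while the combination $\sum_i f_i\vv_i$ stays bounded, so boundedness of $N_i/G$ is not a formality. In fact, the existence of a polynomial decomposition of an equivariant multilinear map $V^{n-1}\to V$ over the generators $\vv_i$ is precisely the covariant form of the first fundamental theorem you are in the middle of proving, so the induction cannot quietly assume it; supplying this step rigorously is the real content of the classical proofs in Weyl and Atiyah--Bott--Patodi, and it is where all the work lies.

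Your fallback --- invoke the first fundamental theorem for $O(V)$ and extract the multilinear part (which is valid: the multidegree-$(1,\ldots,1)$ component of any polynomial in the products $(\vv_i,\vv_j)$ is exactly a linear combination of perfect matchings) --- is correct, but it is also precisely what the paper does: its entire proof of the first part is the citation of \cite[Th.~5.3.5]{GoodmanWallach2009}. So with that fallback your argument is not an alternative proof, it is the same proof. Where you genuinely improve on the paper is the second claim: your symmetrization argument --- average the expansion over $S_n$ and use transitivity of the $S_n$-action on $\pairs(n)$ --- is complete, constructive, and correctly handles the fact that the $\alpha_p$ need not be unique in low dimension (it shows they \emph{can be chosen} equal), whereas the paper disposes of this part with an unexplained ``by contradiction.''
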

\begin{proof}
The first part is a reformulation of \cite[Th.~5.3.5]{GoodmanWallach2009}. The second part is obtained by contradiction.
\end{proof}
Again, for odd integers $n$, $\pairs(n)=\emptyset$ and the right hand side in \eqref{eq:T} vanishes.

We call a random variable $\X$ in $\RR^d$ \emph{isotropic} if its distribution is invariant by rotation, i.e.,
if for any $Q \in O(d)$ and any integrable function $f\colon\RR^d\mapsto\RR$ we have
$\E (f(\X)) = \E(f(Q \X))$.
For such a random variable, and all integer $k$, we define the quantities:
\[
  c_{k}(\X) := \frac{2^k k!}{(2k)!} \frac{\E(X^{2k})}{\E(X^2)^k}
  ,
\]
where $X := \a^T \X$ for an arbitrary, nonzero, deterministic vector $\a \in \RR^d$ (note that $c_{k}(\X)$ does not depend on the choice of $\a$ by the isotropy of $\X$).
Note also that $c_1 = 1$.

We recall the following classical result on the covariance matrix of isotropic random variables. 
\begin{lemma}
\label{prop:isocov}
  Let $\X$ be  an isotropic random variable in $\RR^d$.
  Then $\E(\X \X^T) = \lambda I_d$ for some $\lambda \in \RR$.
\end{lemma}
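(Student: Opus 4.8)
The plan is to show that the covariance matrix $M := \E(\X \X^T)$ commutes with every orthogonal matrix $Q \in O(d)$, and then invoke the fact that the only matrices commuting with all of $O(d)$ are scalar multiples of the identity (a consequence of Schur's lemma, since the standard representation of $O(d)$ on $\RR^d$ is irreducible). The isotropy hypothesis gives me exactly the commutation I need.

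First I would fix an arbitrary $Q \in O(d)$ and compute $\E((Q\X)(Q\X)^T) = Q \E(\X \X^T) Q^T = Q M Q^T$. On the other hand, isotropy of $\X$ means $Q\X$ has the same distribution as $\X$ (applying the defining property $\E(f(\X)) = \E(f(Q\X))$ to the entrywise products $f(\x) = x_i x_j$), so $\E((Q\X)(Q\X)^T) = \E(\X \X^T) = M$. Combining the two gives $Q M Q^T = M$, i.e. $QM = MQ$ for all $Q \in O(d)$. The entries $X_i X_j$ are the relevant integrable functions $f$ here, which is why the lemma is stated for isotropic variables rather than merely isotropic tensors.

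Next I would conclude $M = \lambda I_d$ from the commutation relation. The quickest self-contained argument: for any two unit vectors $u, v$ there is an orthogonal $Q$ with $Qu = v$, and $QMQ^T = M$ forces the quadratic form $u \mapsto u^T M u$ to be constant on the unit sphere; together with symmetry of $M$ (it is a Gram-type matrix, hence symmetric) this pins down $M$ as a scalar multiple of the identity. Alternatively one can diagonalize the symmetric matrix $M$ and use permutation and reflection orthogonal matrices to show all eigenvalues coincide. Either route is elementary and short.

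I do not anticipate a serious obstacle here; the only point requiring a little care is the transition from the defining isotropy property (stated for scalar integrable functions of $\X$) to the statement about the matrix $\E(\X\X^T)$, which I handle by applying the hypothesis componentwise to $f(\x) = x_i x_j$ and assembling the scalar identities into the matrix identity $QMQ^T = M$. The constant $\lambda$ is then simply $\lambda = \tfrac{1}{d}\E(\|\X\|^2) = \E(X_i^2)$ for any coordinate $i$, though the statement only asserts existence of $\lambda$ so I need not compute it explicitly.
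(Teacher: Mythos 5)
Your proposal is correct and takes essentially the same route as the paper: both derive the commutation relation $QMQ^T = M$ for all $Q \in O(d)$ from isotropy (the paper invokes $f(\x)=\x\x^T$ directly, while you more carefully apply the scalar functions $f(\x) = x_i x_j$ componentwise), and both then conclude $M = \lambda I_d$ by elementary linear algebra. The only cosmetic difference is in that last step --- the paper uses orthogonal reflections to show every vector is an eigenvector of $M$, whereas you use transitivity of $O(d)$ on the unit sphere plus symmetry of $M$ (or Schur's lemma) --- but these are interchangeable variants of the same argument.
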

\begin{proof}
The definition of isotropy with $f(\x)=\x\x^T$ yields that the matrix $\Sigma=\E(\X \X^T)$ commutes with all orthogonal matrices $Q\in O(d)$ a property satisfied only by matrices of the form $\lambda I_d$.
Indeed, if a matrix $\Sigma$ commutes in particular with all orthogonal symmetries~$S$, using that $\Sigma$ leaves invariant the eigenspace $\Ker(S+I_d)$ of $S$, we obtain $\Sigma \x\in \Vect(\x)$ for all $\x\in \RR^d$. This later property yields $\Sigma \x=\lambda \x$ for all $\x\in \RR^d$ and for some $\lambda \in \RR$ which is shown to be independent of $\x$ by contradiction and this concludes the proof.
\end{proof}

\begin{theorem}
  Let $\X$ be  an isotropic random variable in $\RR^d$.
  Then for any choice of $n$ deterministic vectors $\a_1$,\ldots,$\a_n\in \RR^d$, we have
  \[
    \E(\a_1^T \X \cdots \a_n^T \X) = 
\begin{cases}
c_k(\X)\sum_{p\in \pairs(n)}\prod_{(\ell,r)\in p} \E(\a_\ell^T \X \, \a_r^T \X), & \text{if $n = 2k$,}\\
0 & \text{if $n$ is odd.}
\end{cases}
  \]
\end{theorem}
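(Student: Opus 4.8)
The plan is to recognize the moment functional as a \emph{symmetric isotropic tensor}, apply the invariant tensor theorem to obtain formula \eqref{eq:T} with a single universal constant, and then identify that constant as $c_k(\X)$. Throughout I assume the moments of order $n$ are finite, so that everything below is well defined.

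First I would set $T(\a_1,\ldots,\a_n) := \E(\a_1^T\X \cdots \a_n^T\X)$ and view it as a map $\otimes^n \RR^d\rightarrow\RR$. Since each factor $\a_i^T\X$ is linear in $\a_i$ and expectation is linear, $T$ is multilinear, hence a genuine tensor; since the factors are scalars whose product is order-independent, $T$ is symmetric. Next I would check that $T$ is isotropic. For $Q\in O(d)$ we have $T(Q\a_1,\ldots,Q\a_n) = \E(\a_1^T Q^T\X \cdots \a_n^T Q^T\X) = \E(f(Q^T\X))$, where $f(\x)=\prod_i \a_i^T\x$. Applying the definition of isotropy of $\X$ with $Q$ replaced by $Q^T\in O(d)$ gives $\E(f(Q^T\X)) = \E(f(\X)) = T(\a_1,\ldots,\a_n)$, as required.

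The invariant tensor theorem then applies directly. For odd $n$ the right-hand side of \eqref{eq:T} is empty, so $T\equiv 0$, giving the odd case. For $n=2k$, the symmetry of $T$ collapses all the coefficients $\alpha_p$ to one common value $\alpha$, so $T(\a_1,\ldots,\a_n) = \alpha \sum_{p\in\pairs(n)}\prod_{(\ell,r)\in p}(\a_\ell,\a_r)$. To convert inner products into covariances I would invoke Lemma~\ref{prop:isocov}: $\E(\a_\ell^T\X\,\a_r^T\X) = \a_\ell^T\E(\X\X^T)\a_r = \lambda(\a_\ell,\a_r)$, so (in the nondegenerate case $\lambda\neq 0$) each inner product equals $\lambda^{-1}\E(\a_\ell^T\X\,\a_r^T\X)$. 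Substituting into the displayed expression turns $T$ into $(\alpha/\lambda^{k})\sum_{p}\prod_{(\ell,r)}\E(\a_\ell^T\X\,\a_r^T\X)$, matching the claimed form up to the scalar $\alpha/\lambda^{k}$.

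It then remains only to show $\alpha/\lambda^{k} = c_k(\X)$, which I would do by the diagonal specialization $\a_1=\cdots=\a_n=\a$ for a fixed nonzero $\a$. Writing $X=\a^T\X$, the left side becomes $\E(X^{2k})$, while the right side is $(\alpha/\lambda^{k})$ times $\#\pairs(2k) = (2k)!/(2^k k!)$ identical factors $\E(X^2)^k$; comparing this with the definition of $c_k(\X)$ yields $\alpha/\lambda^{k} = c_k(\X)$ exactly, completing the proof. The main obstacle here is bookkeeping rather than depth: one must justify finiteness of the moments so that $T$ is defined, and one must handle or explicitly exclude the degenerate case $\lambda=0$, in which $\X=0$ almost surely and $c_k(\X)$ is not defined; past these two points, the result is a direct consequence of the invariant tensor theorem and Lemma~\ref{prop:isocov}.
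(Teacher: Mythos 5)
Your proof is correct and follows essentially the same route as the paper's: recognize the moment functional as a symmetric isotropic tensor, apply the invariant tensor theorem, convert inner products into covariances via Lemma~\ref{prop:isocov}, and identify the constant as $c_k(\X)$ by setting all $\a_i$ equal to a single vector $\a$. Your explicit attention to moment finiteness and to the degenerate case $\lambda=0$ (where $c_k(\X)$ is undefined) is a minor refinement that the paper leaves implicit, not a different approach.
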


\begin{proof}
  Let $A= (\a_1, \a_2,\ldots,\a_n)$ be the matrix containing the vectors $\a_i$ in its columns, and define $Y := A^T \X$.
  As $X$ is isotropic, the tensor $T(\a_1,\a_2,\ldots,\a_n) := \E(Y_1 Y_2\cdots Y_n) = \E(\a_1^T\X \, \a_2^T\X\cdots \a_n^T\X)$ is an isotropic and symmetric tensor,
  and hence of the form~\eqref{eq:T} for  some real constant $\alpha_p = c$.
  We now compute $\E(Y_\ell Y_r) = \E(\a_\ell^T \X \, \a_r^T\X) = \E(\a_{\ell}^T \X \X^T \a_r) = \a_{\ell}^T \E(\X \X^T) \a_r=\lambda(\a_\ell, \a_r)$ where we used Lemma~\ref{prop:isocov}.
  Finally, when $n = 2k$, choosing $\a_j = \a$, for an arbitrary vector $\a$ shows that $c = c_{k}(\X)$.
\end{proof}

Using that the moment generating function of the standard normal distribution is $\mathrm{e}^{t^2/2}$, one can show that $\E(X^{2k}) = \frac{(2k)!}{2^k k!} = \# \pairs(2k)$,
from which we deduce for the case of multivariate normally
distributed vectors with mean zero, $c_k(\X) = 1$.
This permits to deduce the proof of Isserlis' Theorem \ref{thm:isserlis}.


\section{Perspectives}

Let us mention that the Isserlis theorem plays a fundamental role in the construction of exotic aromatic Butcher series, which are formal series index by certain graphs, introduced in \cite{laurent_vilmart_2020,laurent_vilmart_2022} for the numerical analysis of numerical integrators for ergodic stochastic dynamics. 
Such exotic aromatic Butcher series also yields new algebraic structures presented recently in \cite{Bronasco22,bronasco_laurent_2024}, generalizing aromatic Butcher series \cite{bogfjellmo_19,Laurent_McLachlan_Munthe-Kaas_Verdier_2023} with a new kind of edge denoted \textit{lianas} \cite{laurent_vilmart_2020} and paring any two nodes, in the spirit of the notion of paring in Isserlis' theorem. 
The construction of such lianas is based on the following result valid for an arbitrary tensor $T$ (without any isotropy or symmetry assumption on $T$), which can be proved as a straightforward consequence of the Isserlis theorem. 
\begin{theorem} (see \cite[Theorem 4.1]{laurent_vilmart_2020})
Consider a normally distributed vector
  $\Y \sim \No_{d}({\bf 0},\Sigma)$ in $\RR^d$ with mean zero and covariance matrix $\Sigma=AA^T$ with $A=(\bsigma_1,\ldots,\bsigma_d)$.
 Then, for all tensor $T\colon \otimes^n \RR^d\rightarrow \RR$, we have
  \[
    \E\big(T(\Y,\Y, \ldots, \Y)\big) = 
\sum_{p\in \pairs(n)}
\sum_{(k_1,k_2,\ldots,k_n)\in K_d(p)}
T(\bsigma_{k_1},\ldots, \bsigma_{k_n})
  \]
where for each pairing $p\in PP(n)$, we define the set of paired indices 
  \[
  K_d(p) := \{(k_1,\ldots,k_n) \in \{1,\ldots, d\}^n\ ;\ k_\ell=k_r \mbox{ for all } (\ell,r) \in p\}.
\]
\end{theorem}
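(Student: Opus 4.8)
The plan is to reduce the claim to the Isserlis theorem (Theorem~\ref{thm:isserlis}) by exploiting multilinearity of $T$. First I would write $\Y = A\Z$ where $\Z \sim \No_d(\mathbf{0}, I_d)$ is a standard normal vector, which is legitimate precisely because $\Sigma = AA^T$; then each component satisfies $Y_i = \sum_{k=1}^d A_{ik} Z_k = \sum_{k=1}^d (\bsigma_k)_i Z_k$, so that as a vector $\Y = \sum_{k=1}^d \bsigma_k Z_k$. The key idea is that plugging this into the tensor and using $n$-linearity lets me expand

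\begin{equation*}
T(\Y,\ldots,\Y) = \sum_{(k_1,\ldots,k_n)\in\{1,\ldots,d\}^n} Z_{k_1}\cdots Z_{k_n}\, T(\bsigma_{k_1},\ldots,\bsigma_{k_n}).
\end{equation*}

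Taking expectations and using linearity of $\E$ moves the problem onto computing $\E(Z_{k_1}\cdots Z_{k_n})$ for the standard normal vector.

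Next I would apply Isserlis' Theorem to the scalar moment $\E(Z_{k_1}\cdots Z_{k_n})$. Since $\Z$ has covariance $I_d$, the pairwise expectations are $\E(Z_{k_\ell} Z_{k_r}) = \delta_{k_\ell k_r}$, so Isserlis gives
\begin{equation*}
\E(Z_{k_1}\cdots Z_{k_n}) = \sum_{p\in\pairs(n)} \prod_{(\ell,r)\in p} \delta_{k_\ell k_r}.
\end{equation*}
The product $\prod_{(\ell,r)\in p}\delta_{k_\ell k_r}$ is exactly the indicator that the multi-index $(k_1,\ldots,k_n)$ lies in $K_d(p)$, i.e.\ that $k_\ell = k_r$ for every pair $(\ell,r)\in p$. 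Substituting this back and interchanging the order of the finite sums over multi-indices and over pairings yields the claimed formula, with the inner sum over $\{1,\ldots,d\}^n$ collapsing to a sum over $K_d(p)$.

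The main obstacle, such as it is, is bookkeeping rather than conceptual: I must make sure the two finite summations (over $(k_1,\ldots,k_n)$ and over $p\in\pairs(n)$) are interchanged correctly and that the indicator $\prod \delta_{k_\ell k_r}$ is recognized as precisely the membership condition defining $K_d(p)$. One subtlety worth flagging is that a single multi-index may belong to $K_d(p)$ for several distinct pairings $p$ simultaneously (when many indices coincide), so the final double sum is genuinely a sum with multiplicity and not a disjoint partition of $\{1,\ldots,d\}^n$; this is correct and matches the statement, since it mirrors the fact that in Isserlis' formula each pairing contributes independently. No isotropy or symmetry of $T$ is needed anywhere, which is consistent with the theorem's hypotheses, because all the probabilistic content is carried by $\Z$ through Isserlis and $T$ enters only through multilinear expansion.
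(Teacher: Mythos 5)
Your proof is correct and follows exactly the route the paper indicates: the paper states this result ``can be proved as a straightforward consequence of the Isserlis theorem'' (deferring details to \cite[Theorem 4.1]{laurent_vilmart_2020}), and your argument---writing $\Y = A\Z$ with $\Z \sim \No_d(\0, I_d)$, expanding $T$ by multilinearity, and applying Isserlis with $\E(Z_{k_\ell} Z_{k_r}) = \delta_{k_\ell k_r}$ so that the product of deltas becomes the indicator of $K_d(p)$---is precisely that straightforward consequence, spelled out. Your remark that a given multi-index can lie in $K_d(p)$ for several pairings $p$, so the double sum carries multiplicity, is a correct and worthwhile clarification.
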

Note that for all $p\in \pairs(2k)$, the set $K_d(p)$ has $d^{k}$ elements. For instance, for $n=4$,
 \[
\E\big(T(\Y,\Y,\Y,\Y)\big) = \sum_{i,j=1}^d \big(T(\bsigma_{i},\bsigma_{i},\bsigma_{j},\bsigma_{j})
+T(\bsigma_{i},\bsigma_{j},\bsigma_{i},\bsigma_{j})
+T(\bsigma_{i},\bsigma_{j},\bsigma_{j},\bsigma_{i})\big).
\]
We believe that this short proof of Isserlis' theorem using the invariant tensor theorem for isotropic tensors could give new insight on further studies of equivariance characterization properties for Butcher series \cite{McLachlan_Modin_MuntheKaas_Verdier_2016} and generalisations~\cite{laurent_munthekaas_2024}.

\medskip
\noindent
\textbf{Acknowledgements:} We thank Arieh Iserles for enlightening us on the genealogy of Moses~Isserles. 
H. M.-K. is supported by the Research Council of Norway through
project 302831 Computational Dynamics and Stochastics on Manifolds (CODYSMA).
G.V. was partially supported by the Swiss National Science Foundation, projects No 200020\_214819 and No. 200020\_192129. 

\bibliographystyle{amsplain}
\providecommand{\bysame}{\leavevmode\hbox to3em{\hrulefill}\thinspace}
\providecommand{\MR}{\relax\ifhmode\unskip\space\fi MR }
\providecommand{\MRhref}[2]{%
  \href{http://www.ams.org/mathscinet-getitem?mr=#1}{#2}
}
\providecommand{\href}[2]{#2}

\end{document}